\documentclass[12pt,oneside]{amsart}
\usepackage{amscd,amsmath,amssymb,amsfonts,a4}
\usepackage{hyperref}


\newlength{\rulebreite}


\def\timesover#1#2#3{\ \xymatrix@1@=0pt@M=0pt{ _{#1}&\times&_{#2} \\& ^{#3}&}\ }
\def\otimesover#1#2#3{\ \xymatrix@1@=0pt@M=0pt{ _{#1}&\otimes&_{#2} \\& ^{#3}&}\ }

\usepackage[all]{xy}
\theoremstyle{plain}
\newtheorem{thm}{Theorem}

\newtheorem{cor}[thm]{Corollary}
\newtheorem{prop}[thm]{Proposition}
\theoremstyle{definition}

\newtheorem{questions}[thm]{Questions}

\numberwithin{thm}{section}
\numberwithin{equation}{section}

\newcommand{\ga}[2]{\begin{gather}\label{#1}#2 \end{gather}}

\newcommand{\Pic}{{\rm Pic}}

\newcommand{\Spec}{{\rm Spec \,}}


\newcommand{\sD}{{\mathcal D}}

\newcommand{\sO}{{\mathcal O}}

\newcommand{\A}{{\mathbb A}}

\newcommand{\C}{{\mathbb C}}

\newcommand{\E}{{\mathbb E}}
\newcommand{\F}{{\mathbb F}}
\newcommand{\G}{{\mathbb G}}

\newcommand{\N}{{\mathbb N}}
\renewcommand{\P}{{\mathbb P}}
\newcommand{\Q}{{\mathbb Q}}

\newcommand{\Z}{{\mathbb Z}}

\begin{document}

\title[Flat bundles]{On flat bundles in characteristic $0$ and $p>0$}
\author{H\'el\`ene Esnault}
\address{
Universit\"at Duisburg-Essen, Mathematik, 45117 Essen, Germany}
\email{esnault@uni-due.de}
\date{ May 14, 2012}
\thanks{The author is supported by   the SFB/TR45 and the ERC Advanced
Grant 226257}
\begin{abstract} We discuss analogies between the fundamental groups of flat bundles in characteristic $0$ and $p>0$.
\end{abstract}
\maketitle
\section{Introduction}
In this short note, we discuss some analogies between $\sO_X$-coherent $\sD_X$-modules 
over $X$ quasi-projective over $k=\C$, the field of complex numbers, and over an algebraically closed field of characteristic $p>0$. For the finiteness problems we singled out, they are striking. Yet we have no way to go from characteristic $0$ to characteristic $p>0$ and vice-versa. 

\medskip

The note relies on work I have been doing in the last years with numerous mathematicians, notably  Ph\`ung H\^o Hai, Adrian Langer, Vikram Mehta, Xiaotao Sun. I thank them all for the fruitful exchange of ideas and the pleasure of sharing them. 
 I thank Alexander Beilinson and Bruno Klingler for discussions on
questions related to the topic of this note, 
 Lars Kindler for all the discussions on regular singularities reflected in his work \cite{Kl}.  In addition, I thank
 Moritz Kerz for the mathematics I learned from him.

\section{Characteristic $0$} \label{sec:0}
Let $X$ be a smooth connected  quasi-projective variety defined over the field $\C$ of complex numbers, together with a  complex point $a \in X(\C)$. Then one has the topological fundamental group $\pi_1^{\rm top}(X,a)$ of loops centered at $a$  of the underlying topological spaces, modulo homotopy,
 as defined by Poincar\'e (\cite{Po}). 
 
 \medskip

Grothendieck  developed a different viewpoint on it. He considered the category 
of topological covers of $X$. The objects are topological covers $\pi: Y\to X$
while the maps $\theta: \pi_1\to \pi_2$ are continuous maps $Y_1\to Y_2$ over
$X$. The point $a$ defines a fiber functor  $\omega^{\rm top}_a$  to the
category of sets by sending $\pi$ to $\pi^{-1}(a)$ and $\theta$ to $\theta|_a:
\pi_1^{-1}(a)\to \pi_2^{-1}(a)$. Then the fundamental group becomes identified
with the automorphism group of $\omega^{\rm top}_a$: $\pi_1^{\rm
top}(X,a)\xrightarrow{\cong}{\rm Aut}(\omega^{\rm top}_a)$, by sending a loop
centered at $a$ to the collection of induced automorphisms of
 $\pi^{-1}(a)$ for all topological covers $\pi$ (\cite[10.11]{Del2}).
 
 \medskip

He restricted $\omega^{\rm top}_a$ to
the full subcategory  of finite covers of $X$. By the 
 Riemann existence theorem,   the natural functor from  
 the category of finite \'etale covers of $X$ to the category of finite topological covers is
 an equivalence of categories.
 Restricting $\omega_a^{\rm top}$ to it defines the fiber functor 
 $\omega_a$ from the category of finite \'etale covers of $X$ to finite sets.
  This defines the \'etale fundamental group $\pi_1^{{\rm \acute{e}t}}(X,a)$ centered at $a$ as the automorphism group of $\omega_a$.
 This is  thus the profinite completion of $\pi_1^{\rm top}(X,a)$. 
Grothendieck's definition has the advantage to be general. He defines  the
\'etale fundamental group $\pi_1^{\rm \acute{e}t}(X,a)$ in \cite[V]{SGA1}   for
any connected normal scheme $X$ 
 exactly in the same way, in particular for $X=\Spec k$, where $k$ is a field, in which case  $\pi_1^{{\rm \acute{e}t}}(X,a)$ becomes identified with 
 ${\rm Gal}(\bar k/k)$ where $\bar k \subset \kappa(a)$ is the separable closure of $k$ lying in the residue field of the chosen geometric point.  
 The fundamental theorem in those theories says that the category of topological (resp. finite \'etale) covers of $X$ 
is equivalent, via the fiber functor $\omega_a^{\rm top}$ (resp. $\omega_a$), to
the representation category of $\pi_1^{\rm top}(X,a)$  (resp. $\pi_1^{\rm
\acute{e}t}(X,a)$) in the category of sets (resp. finite sets).
(\cite{Del2}, {\it loc.~cit.} \cite[V,~Proposition~5.8]{SGA1}).
 
 \medskip
 
Coming back to $X$ a smooth connected  quasi-projective variety over  $\C$, the underlying topological space is a finite CW complex (\cite[Remark~p.40]{AH}), and thus 
$\pi_1^{\rm top}(X,a)$ is finitely presented, in particular finitely generated. This implies that its pro-algebraic completion 
\ga{}{\pi_1^{\rm top}(X,a)^{{\rm alg}}:=\varprojlim_H H\notag}
where $H\subset GL(r, \C)$ is the Zariski closure of a complex linear representation of $\pi_1^{\rm top}(X,a)$, is controlled by the pro-finite completion $\pi_1^{\rm \acute{e}t}(X,a)$. For example, if $f:X\to Y, \ a \mapsto b$ is a morphism of smooth complex connected quasi-projective varieties, then 
if $f_*: \pi_1^{\rm \acute{e}t}(X,a)\to \pi_1^{\rm \acute{e}t}(Y,b)$ is an isomorphism, so is $f_*: \pi_1^{\rm top}(X,a)^{{\rm alg}}\to \pi_1^{\rm top}(Y,b)^{{\rm alg}}$ (Malcev \cite{Mal},  Grothendieck \cite[Th\'eor\`eme~1.2]{Gr}).
In particular, applied to $Y=\Spec \C$, it says that if the \'etale fundamental group is trivial, there are no non-trivial complex linear representations of $\pi_1^{\rm top}(X,a)$, that is no non-trivial complex linear systems. 
The proof of this last point is easy. Since $\pi_1^{\rm top}(X,a)$ is finitely
generated, a complex linear representation $\rho: \pi_1^{\rm top}(X,a) \to
GL(r, \C)$ has values in $GL(r, A)$ where $A$ is a ring of finite type over
$\Z[\frac{1}{N}]$, for some natural number $N\neq 0$. If $\rho$ is not trivial, then there is a
 closed point $s\in \Spec A$, with finite residue field $\kappa(s)$, such
that the specialization $\rho\otimes \kappa(s): \pi_1^{\rm top}(X,a) \to GL(r,
\kappa(s))$ is not trivial as well.
\medskip

Let us write the three groups in a diagram. One has 
\ga{}{\xymatrix{\pi_1(X,a)^{{\rm top}} \ar[d] \ar[dr] \\
\pi^{\rm top}_1(X,a)^{\rm alg} \ar[r] & \pi_1^{ \rm \acute{e}t  }(X,a)
}\label{triangle} }
where the top group is a finitely presented  abstract group, the left bottom group is a  pro-algebraic group over $\C$, the right bottom group is a pro-finite group,  which is the pro-finite completion of the top one, and the horizontal 
map is a morphism of pro-algebraic groups over $\C$ when one thinks of the bottom right group as a constant pro-algebraic group over $\C$. 
The horizontal map is surjective as any finite \'etale cover $\pi: Y\to X$
defines the local system $\pi_*\C$ with finite monodromy. In fact, the
horizontal map is the pro-finite completion map. We saw that in the sense
discussed above, $\pi_1^{ \rm \acute{e}t  }(X,a)$ controls $\pi^{\rm
top}_1(X,a)^{\rm alg}$.  So $\pi_1^{\rm top}(X,a)^{{\rm alg}}$
is controlled by its profinite completion.

\medskip

On the other hand, by the Riemann-Hilbert correspondence \cite{Del}, the representation category of $\pi_1^{\rm top}(X,a)^{{\rm alg}}$ in finite dimensional complex vector spaces is tensor equivalent to the category of vector bundles with a regular singular flat connection. 
The latter is equivalent to the category of regular singular $\sO_X$-coherent $\sD_X$-modules.
This notion is purely algebraic in the following sense: if $k$ is a  field of characteristic $0$, $X$ is a smooth geometrically connected quasi-projective variety defined over $k$, then the category of vector bundles  $(E,\nabla)$ with a flat connection, or equivalently the category
of $\sO_X$-coherent $\sD_X$-modules, 
 is an abelian, $k$-linear, rigid tensor category, neutralized by the choice of a $k$-point $a\in X(k)$. The fiber functor $\omega^{\rm alg}_a$ sends $(E,\nabla)$ to $E|_a$. The Tannaka group 
$\pi^{\rm alg}_1(X,a):={\rm Aut}^{\otimes}(\omega^{\rm alg}_a)$ is
then a pro-algebraic group over $k$, its representation category in finite
dimensional $k$ vector spaces is via $\omega^{{\rm alg}}_a$ equivalent to the
category of vector bundles with a  flat connection.  The category of flat
connections contains the full subcategory of flat connections with regular
singularities at infinity \cite{Del}. Denoting by $\omega_a^{\rm alg, rs}$ the
restriction of $\omega^{\rm alg}_a$ to this subcategory defines the Tannaka
group $\pi^{\rm alg,rs}_1(X,a):={\rm Aut}^{\otimes}(\omega^{{\rm alg, rs}}_a)$
as a quotient of $\pi_1^{\rm alg}(X,a)$.   If $k=\C$, then $\pi_1^{\rm top}(X,a)^{\rm alg}=\pi_1^{\rm alg, rs}(X,a)$.

 \medskip 
 
Again considering $\pi^{\rm \acute{e}t}_1(X,a)$
as a constant pro-algebraic group over $k$, the homomorphism
\ga{}{\label{pf}
\pi_1^{\rm alg}(X,a)\to \pi^{\rm \acute{e}t}_1(X,a)}
factors through
\ga{}{\pi^{\rm alg}_1(X,a)\to \pi^{\rm alg,rs}_1(X,a) \to  \pi^{\rm
\acute{e}t}_1(X,a)  \label{fact}  }
and $\pi^{\rm
\acute{e}t}_1(X,a)$ is the pro-finite completion both of $\pi_1^{\rm alg}(X,a)$
and  $\pi^{\rm alg,rs}_1(X,a)$.

\medskip 

 However, if $K\supset k$ is a field extension, the natural base change morphisms
 $$\pi_1^{\rm alg}(X\otimes_k K ,a \otimes_k K) \to \pi_1^{\rm alg}(X,a)\otimes_k K$$ and 
$$\pi_1^{\rm alg, rs}(X\otimes_k K ,a \otimes_k K) \to \pi_1^{\rm alg,rs }(X,a)\otimes_k K$$
 of pro-algebraic groups over $K$ are not isomorphisms (\cite[10.35]{Del2}). 
 It is discussed in {\it loc.~cit.} over $X=\G_m$, but this is still not an
isomorphism even if $X$ is projective,  in which case the surjective
homomorphism $ \pi_1^{\rm alg}(X,a)\to \pi_1^{\rm alg,rs }(X,a)$ is an
isomorphism. For example, assume $K=\overline{ k(t)}, \ H^0(X, \Omega^1_X)\neq
0$.  Then the flat connection 
 $(\sO_X, d+t\alpha)$, for $0\neq \alpha\in H^0(X, \Omega^1_X)$ can not be a subquotient of a connection defined over $k$.
 Still the base change morphisms are faithfully flat,  as any subconnection $(E',\nabla')\subset (E,\nabla)\otimes_k K$  is defined over a finite extension of $L\supset k$ in $K$ (\cite[Proposition~2.21]{DM}).
 
 \begin{prop}[\cite{Gr},~Th\'eor\`eme~1.2 over $\C$]  \label{f}
 Let $k$ be an algebraically closed field of characteristic $0$.
 \begin{itemize}
 \item[i)] If  $f: X\to Y$ is a morphism of smooth connected quasi-projective
varieties mapping $a\in X(k)$ to $b\in Y(k)$ . If $f_*:  
\pi_1^{\rm \acute{e}t}(X,a)\to \pi_1^{\rm \acute{e}t}(Y,b)$ is an isomorphism,
then 
 $f_*: \pi_1^{\rm alg, rs }(X,a)\to \pi_1^{\rm alg, rs}(Y,b)$ is an isomorphism
as well.
 \item[ii)] If $X$ is a  smooth connected  quasi-projective variety with $a\in X(k)$, such that  $\pi_1^{\rm \acute{e}t}(X,a)$ is trivial. Then $\pi_1^{\rm alg,rs }(X,a)$ is trivial as well.
 \end{itemize}

 \end{prop}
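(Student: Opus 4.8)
The plan is to deduce both parts from the case $k=\C$, where the statement is the theorem of Grothendieck recalled in the introduction. First, (ii) is the special case $Y=\Spec k$ of (i): there $\pi_1^{\rm \acute{e}t}(Y,b)$ and $\pi_1^{\rm alg,rs}(Y,b)$ are trivial, so the hypothesis becomes $\pi_1^{\rm \acute{e}t}(X,a)=1$ and the conclusion $\pi_1^{\rm alg,rs}(X,a)=1$. Hence I treat only (i). By Tannaka duality, $f_*\colon \pi_1^{\rm alg,rs}(X,a)\to\pi_1^{\rm alg,rs}(Y,b)$ is an isomorphism if and only if the pullback functor $f^*$ on the categories of regular singular $\sO$-coherent $\sD$-modules is an equivalence, so it suffices to show that $f^*$ is fully faithful and essentially surjective. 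To import the known case, note that $X$, $Y$, $f$ and the points $a,b$ are all defined over a subfield $k_0\subset k$ finitely generated over $\Q$; let $L\subset k$ be the algebraic closure of $k_0$ in $k$, an algebraically closed field of finite transcendence degree over $\Q$, so that $L$ embeds into $\C$ and both $L\subset k$ and $L\subset\C$ are extensions of algebraically closed fields. Since the étale fundamental group of a variety over an algebraically closed field of characteristic $0$ is invariant under extension of the algebraically closed base field, the hypothesis that $f_*$ be an isomorphism on $\pi_1^{\rm \acute{e}t}$ transports through $L$ from $k$ to $\C$; and over $\C$, via the identification $\pi_1^{\rm alg,rs}=\pi_1^{\rm top}(-)^{\rm alg}$, Grothendieck's Th\'eor\`eme~1.2 (\cite{Gr}) gives that $f^*$ is an equivalence on regular singular connections.

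For full faithfulness over $k$ I would work at the level of $\Hom$-spaces. By rigidity, $\Hom(V,W)$ in the Tannakian category is the space $H^0_{\rm dR}$ of horizontal global sections of the internal $\Hom$ $V^\vee\otimes W$, a finite dimensional $k$-vector space, and likewise on $X$ with $f^*(V^\vee\otimes W)=(f^*V)^\vee\otimes f^*W$; the functor $f^*$ acts by pullback of horizontal sections. Global de Rham $H^0$ commutes with the flat base change $k\to\C$, so after $\otimes_k\C$ the map $f^*\colon\Hom(V,W)\to\Hom(f^*V,f^*W)$ becomes the corresponding map over $\C$, which is bijective by the case already settled. As $k\to\C$ is faithfully flat, a $k$-linear map that becomes bijective after $\otimes_k\C$ is itself bijective, so $f^*$ is fully faithful over $k$.

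The essential surjectivity is the heart of the matter, and it is precisely here that one must confront the fact, emphasized above, that the base change morphisms on $\pi_1^{\rm alg,rs}$ are not isomorphisms: one cannot simply descend the statement from $\C$. Let $E$ be a regular singular connection on $X$. Over $\C$ the equivalence supplies a regular singular connection $F_\C$ on $Y\otimes_k\C$ with $f^*F_\C\cong E\otimes_k\C$. I would then spread $F_\C$ and this isomorphism out over a finitely generated $k$-subalgebra $R\subset\C$: after shrinking $\Spec R$ one obtains a regular singular connection $\sF$ on $Y\times_k\Spec R$ relative to $R$, together with an isomorphism $f^*\sF\cong E\otimes_k R$. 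Because $k$ is algebraically closed and $R$ is a finitely generated $k$-algebra, $\Spec R$ has a $k$-rational point $s$; specializing $\sF$ at $s$ yields a connection $F:=\sF_s$ on $Y$ with $f^*F\cong E$. Integrability is preserved under specialization, and regular singularity is preserved as well provided one spreads out Deligne's canonical logarithmic extension on a good compactification of $Y$ and specializes it; thus $F$ lies in the source category and $E$ is in the essential image of $f^*$.

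The step I expect to be the main obstacle is exactly this last one, namely producing the object $F$ over $k$ itself. Full faithfulness base changes painlessly, being an assertion about finite dimensional $\Hom$-spaces that are insensitive to the field of definition; but an object witnessing essential surjectivity need not descend along $k\subset\C$, and indeed the base change functor on regular singular connections is very far from essentially surjective (connections such as $(\sO_X,d+t\alpha)$ over $\overline{k(t)}$ are not defined over $k$). The device that rescues the argument is that $k$ is algebraically closed, so that after spreading the $\C$-object out over a scheme of finite type over $k$ one can specialize back at a $k$-rational point; the only genuinely technical point is to check that regular singularity survives this spreading-out and specialization. Granting full faithfulness and essential surjectivity, $f^*$ is an equivalence, $f_*$ is an isomorphism, and (ii) follows by taking $Y=\Spec k$.
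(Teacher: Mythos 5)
Your overall strategy --- reduce to $k=\C$ via Malcev--Grothendieck and the Riemann--Hilbert identification, then spread the object produced over $\C$ out over a finitely generated subalgebra and specialize at a rational point, checking regular singularity by spreading out a logarithmic extension --- is exactly the paper's strategy. The only cosmetic difference is that you verify that $f^*$ is fully faithful and essentially surjective, where the paper verifies the two criteria of \cite[Proposition~2.21]{DM} (faithful flatness via stability of the essential image under subobjects, closed immersion via every object being a subquotient of a pullback); both are legitimate Tannakian reformulations of ``$f_*$ is an isomorphism.''

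There is, however, a genuine gap in your treatment of fields $k$ that do not embed into $\C$. You introduce $L$, the algebraic closure in $k$ of a field of definition $k_0$ of $(X,Y,f,a,b)$, and you correctly transport the \emph{hypothesis} on $\pi_1^{\rm \acute{e}t}$ through $L$ to $\C$. But from then on your argument is written as if $k$ itself were a subfield of $\C$: you form $E\otimes_k\C$ and $V^\vee\otimes W\otimes_k\C$, invoke ``the flat base change $k\to\C$,'' and spread out over ``a finitely generated $k$-subalgebra $R\subset\C$.'' None of this exists in general: an algebraically closed field of characteristic $0$ whose transcendence degree (or cardinality) exceeds that of $\C$ admits no embedding into $\C$. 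Your $L$-device does not repair this, because the objects $V,W,E$ of the Tannakian categories over $k$ are \emph{not} defined over $L$; the field $L$ only takes care of the varieties and of $f$. What is missing is precisely the descent step carried out in the last paragraph of the paper's proof: each individual object or morphism is defined over the algebraic closure $\bar K_0\subset k$ of a field $K_0$ of finite type over $\Q$ containing $k_0$; such a $\bar K_0$ \emph{does} embed into $\C$, so the embeddable case (your argument, which is correct in that case) applies over $\bar K_0$, and one then base changes the conclusion from $\bar K_0$ back up to $k$, using that Hom-spaces of flat connections, subobjects, and isomorphisms behave well under extensions of algebraically closed fields. As written, your proof establishes the proposition only for $k$ embeddable in $\C$, not for arbitrary algebraically closed $k$ of characteristic $0$.
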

 \begin{proof}
  Then if  $k\subset K $ is an extension of algebraically closed fields, the base change morphism
 \ga{}{ \pi_1^{\rm \acute{e}t}(X\otimes_k K ,
a\otimes_k K)\to \pi_1^{\rm \acute{e}t}(X,a) \label{bc} }
is an isomorphism  (\cite[Introduction]{LSe}).  

We show i).
Assuming $k$ is embeddable in $\C$, the assumption implies that 
 $\pi_1^{\rm alg,rs }(X\otimes_k\C ,a\otimes_k \C)\to \pi_1^{\rm
alg,rs}(Y\otimes_k \C,b\otimes_k \C)$ is an isomorphism by Malcev-Grothendieck's
theorem. 
 So given $M$ a flat regular singular connection on $Y$, and $N\subset f^*M$ a
subconnection (thus regular singular) on 
$X$, there is a subconnection $N'\subset M\otimes_k \C$ (thus regular singular,
as $\otimes \C$ preserves the property) with $(f\otimes_k
\C)^*(N')=N\otimes_k \C$. These relations are   defined over an affine variety
$S$ over $k$ with $k(S)\subset \C$. Thus if $p^Z_1: Z\times_k S\to Z$  denotes
the first projection, one obtains $N'_S\subset( p^Y_1)^*M$ with $(f\times
1_S)^*(N'_S)=(p^X_1)^* N$ for some relative  flat connection $N_S$. Choosing
$s\in S(k)$ a rational point, the restriction $N'_s$ of $N'_S$ to $Y\times_k s$
fulfills $N'_s\subset M$ (thus is regular singular) and $f^*(N'_s)=N$. This
shows that $\pi_1^{\rm alg, rs
}(X,a)\to \pi_1^{\rm alg, rs}(Y,b)$ is faithfully flat
(\cite[Proposition~2.21]{DM}). 

 Similarly, if $M$ is a flat regular singular connection on $X$,
there is a flat regular singular connection $N$ on $Y\otimes_k \C$ such that $M$
is a subquotient
of $(f\otimes \C)^*(N)$. As $N$ is regular singular, there is a
smooth compactification $\hat{Y}$ of $Y$ such that $\hat{Y}$ is projective and 
$D= \hat{Y}\setminus Y$ is a normal crossings divisor, and there is a locally
free extension $\hat N$ of $N$ such that the connection extends to $\hat{N}\to
\Omega^1_{\hat Y\otimes_k \C/\C}(\log D)\otimes \hat N$.
Again spreading out,
$\hat N$ is obtained by base change from a flat connection connection 
$\hat N_S \to \Omega^1_{\hat Y\times_k S/S}(\log D)\otimes \hat N_S$ 
on $Y\times_k S$ relative to $S$. 
So restricting $N_S =\hat N_S|_{Y\times_k S} $ to a $k$ point of $S$
shows that $M$ is a subquotient of a flat connection $f^*N_0$, with $N_0$ a
flat regular singular connection defined over $Y$.
This shows that  $\pi_1^{\rm alg, rs }(X,a)\to \pi_1^{\rm
alg, rs}(Y,b)$  is a closed embedding (\cite{DM}, {\it loc.~cit.}). This
finishes
the proof if $k$ is embeddable in $\C$.

In general,  an object or  a morphism between two objects  is defined over a field  $K_0$ of finite type over $\Q$ in $k$ containing the field of definition $k_0$ of $f$, so $f=f_0\otimes_{k_0} k$.  One applies the previous isomorphism to an algebraic closure $\bar K_0$ of $K_0$ to conclude that over $\bar K_0$,  $(f_0\otimes_{k_0} \bar K_0)^*$ identifies objects and morphisms. This finishes the proof of i).

 ii)  is a particular case of i) for $Y=\Spec k$.

 \end{proof}

\medskip

Proposition~\ref{f} i), while applied to $f$ being the Albanese map, implies

\begin{cor}[see \cite{G},~Theorem~0.4 over $\C$] \label{cor:f}
 Let $k$ be an algebraically closed field of characteristic $0$, let $X$ be a
smooth projective variety over $k$. Then $\pi_1^{\rm \acute{e}t}(X,a)$ is
abelian, if and only if every irreducible bundle with a flat connection has
rank $1$. 
\end{cor}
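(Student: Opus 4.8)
The plan is to read the statement Tannakianly and push the Malcev--Grothendieck principle of Proposition~\ref{f} through the Albanese map. Since $X$ is projective, every flat connection is regular singular, so $\pi_1^{\rm alg}(X,a)=\pi_1^{\rm alg,rs}(X,a)=:G$, and irreducible bundles with a flat connection correspond to irreducible representations of $G$. For such a $V$ the Zariski closure $H$ of the monodromy is reductive: its unipotent radical fixes a nonzero subspace, which is $H$-stable by normality, hence all of $V$, so it acts trivially and is trivial. Thus ``every irreducible flat bundle has rank $1$'' is equivalent to ``$H$ is a torus for every irreducible $V$'', i.e.\ to $G$ having no non-commutative reductive quotient. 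Recall also that $\pi_1^{\rm \acute{e}t}(X,a)$ is the profinite completion of $G$, so it only detects the finite quotients of $G$.

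One direction is elementary: if $\pi_1^{\rm \acute{e}t}(X,a)$ is not abelian it has a finite non-abelian quotient $Q$, and any irreducible $Q$-representation of dimension $\geq 2$ pulls back, via $\pi_1^{\rm \acute{e}t}(X,a)\to Q$, to an irreducible flat bundle of rank $\geq 2$ with finite monodromy. For the converse I would use the Albanese morphism $\alpha\colon X\to A$ together with the fact that $\pi_1^{\rm alg}(A,b)$ is commutative, since $\pi_1^{\rm top}(A)=\Z^{2\dim A}$ is abelian and the pro-algebraic completion of an abelian group is commutative; equivalently, every irreducible flat bundle on an abelian variety has rank $1$. The first task is to kill the connected semisimple part. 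Suppose some irreducible $V$ had $H^{\circ}$ non-toral; then $S:=H/\mathrm{rad}(H)$ is a non-trivial connected semisimple quotient of $G$. I would pass to a finite \'etale cover $X''\to X$ with $\pi_1^{\rm \acute{e}t}(X'')$ torsion free, possible because the topologically finitely generated profinite abelian group $\pi_1^{\rm \acute{e}t}(X,a)$ has finite torsion, which splits off. For such $X''$ the Albanese $\alpha''$ induces an isomorphism on $\pi_1^{\rm \acute{e}t}$, so Proposition~\ref{f}~i) gives $\pi_1^{\rm alg}(X'')\xrightarrow{\sim}\pi_1^{\rm alg}(A'')$, which is commutative. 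Being connected, $S$ is still a quotient of the open subgroup $\pi_1^{\rm alg}(X'')$, hence commutative, contradicting $S$ semisimple. So $H^{\circ}$ is always a torus, while $\pi_0(H)$, a finite quotient of $\pi_1^{\rm \acute{e}t}(X,a)$, is abelian.

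It then remains to exclude the ``Heisenberg'' situation in which $H^{\circ}$ is a torus and $\pi_0(H)$ is abelian, yet $H$ is non-commutative with $V$ of rank $\geq 2$. Restricting $V$ to the cover $X'\to X$ trivialising $\pi_0(H)$, it becomes isotypical, $V|_{X'}\cong\chi^{\oplus r}$ for a single character $\chi$, because the deck group acts trivially on $\pi_1^{\rm \acute{e}t}(X')$, hence on $H_1(X',\Z)\subset H_1(X',\hat{\Z})$, hence on the characters. The only obstruction to commutativity of $H$ is now the commutator pairing $\pi_0(H)\times\pi_0(H)\to H_1(X',\Z)$ of the topological extension, composed with $\chi$. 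As $H_1(X',\Z)$ is finitely generated it injects into $H_1(X',\hat{\Z})$, where this pairing is the commutator pairing of $\pi_1^{\rm \acute{e}t}$, which vanishes by hypothesis; hence the pairing itself vanishes, $H$ is commutative, and $V$ has rank $1$.

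I expect the genuine obstacle to be precisely this upgrade from profinite to pro-algebraic data: abelianness of $\pi_1^{\rm \acute{e}t}(X,a)$ constrains only the component group $\pi_0(G)$, and forcing the identity component to be toral is exactly where Malcev--Grothendieck, in the form of Proposition~\ref{f} fed through the Albanese map, is indispensable. The delicate technical points are the torsion bookkeeping needed to make $\alpha$ an isomorphism on $\pi_1^{\rm \acute{e}t}$ after a cover, and the Clifford-type descent that reduces the residual finite obstruction to a pairing valued in a finitely generated group, where abelianness can finally be applied.
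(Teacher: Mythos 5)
Your proof is correct and follows exactly the route the paper indicates: the paper's entire proof of Corollary~\ref{cor:f} is the single sentence that Proposition~\ref{f}~i), applied to the Albanese map, implies it, and your torsion-killing cover (needed because the Albanese map is an isomorphism on $\pi_1^{\rm \acute{e}t}$ only once the finite torsion of the abelianized fundamental group is removed), the commutativity of $\pi_1^{\rm alg}$ of an abelian variety, and the Clifford-type descent through the commutator pairing are precisely the details that sentence suppresses. Two small repairs are needed: $H/{\rm rad}(H)$ is connected only when $H$ is, so the semisimple-part step should be run with $H^{\circ}/{\rm rad}(H^{\circ})$ --- equivalently, note that the closure of the image of the commutative group $\pi_1^{\rm alg}(X'')$ in $H$ is a closed normal subgroup of finite index, hence contains $H^{\circ}$, forcing $H^{\circ}$ to be commutative connected reductive, i.e. a torus (also, the monodromy of a rank-one object is diagonalizable, not necessarily a torus, though this does not affect your logic). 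Finally, your topological steps ($\pi_1^{\rm top}$, $H_1(X',\Z)$ and its injection into its profinite completion) presuppose an embedding $k\hookrightarrow \C$, so for general algebraically closed $k$ of characteristic $0$ one must add the same spreading-out reduction that the paper carries out at the end of the proof of Proposition~\ref{f}; with these adjustments the argument is complete.
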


\medskip

Proposition \ref{f} is an algebraic statement.  Its proof relies on the theorem of Malcev-Grothendieck, which in turn is a consequence of $\pi_1^{\rm top}(X,a)$ being finitely generated for $X$ defined over $\C$.   We are not aware of the existence of an algebraic proof to it.  

\medskip

To close this section, we observe that the finite generation of $\pi_1^{\rm
top}(X,a)$  is also reflected, for $X$ projective smooth over $\C$, by the
moduli theory of Simpson \cite{Simp}. The Betti moduli space $M_B(X)$, which
parametrizes complex local systems of rank $r$, is a complex affine variety. He
constructed the de Rham moduli space $M_{dR}(X)$, and, fixing a polarization,  the Higgs moduli space
$M_{{\rm Higgs}}(X)$. Both are quasi-projective varieties. They map to the moduli spaces of semi-stable bundles, so his construction via geometric invariant theory generalizes the classical one
for vector bundles. 

\medskip

Furthermore, they  are all homeomorphic, in fact even real analytically isomomorphic. So for example,  if
one knows that $M_{{\rm
Higgs}}(X)$ is projective, then necessarily it is $0$-dimensional, that is
there are finitely many isomorphism classes of irreducible rank $r$ local
systems (see \cite{Kl} where this argument is used). But to conclude directly
Proposition~\ref{f} ii), one would need for instance that points in $M_{dR}(X)$
say, which correspond to local systems with finite monodromy, are dense. It is true
rank $1$. In higher rank there are rigid local systems which are isolated  (we thank Burt Totaro for the reference \cite{CS}) .  More generally, it would be of high interest to understand the points 
in the three moduli spaces which, in $M_{{\rm Betti}}(X)$, correspond to local systems with finite monodromy.
Those flat connections $(E,\nabla)$ are uniquely determined by the underlying vector bundle $E$. By
 Nori \cite{No}, such vector bundles (which he called ``finite'') are purely algebraically described. They are those vector bundles  $E$ with satisfy a  polynomial equation $f(E)=g(E)$,  where $f,g\in \N[T], f\neq g$. (Here in characteristic $0$ the category is semi-simple, so there is no need to introduce the essentially finite bundles \cite[Section~2]{EH}).  However, those equations involve higher rank bundles as well, so it is difficult to cut them down to $M_{dR}(X)$.

\section{Characteristic $p>0$} \label{sec:p}
Let $X$ be a smooth connected  quasi-projective variety defined over an
algebraically closed field $k$ of characteristic $p>0$, endowed with a point
$a\in X(k)$. 
Clearly we do not have a notion of topological fundamental group at disposal.
But we have the theory of \'etale fundamental groups.

\medskip

 We denote by $X^{(1)}$ the pull-back of $X$ over the Frobenius of
$\Spec k$, and by $F_{X/k}: X\to X^{(1)}$ the Frobenius of $X$ relative to $k$. 
 The Homs in the category of bundles with a flat
 connection  are linear over $\sO_{X^{(1)}}$, so, unless $X$ is proper, there
are not finite dimensional $k$-vector spaces. On the other hand, the category
of 
$\sO_X$-coherent $\sD_X:=\sD_{X/k}$-modules is, as in characteristic $0$, an abelian,
rigid tensor category \cite{G}. The rigidity  comes from the fact that if $E$
is a $\sO_X$-coherent $\sD_{X}$-module, then it is locally free (Katz,
\cite[Theorem~1.3,~Proof]{G}). Fixing a rational point $a\in X(k)$ defines a
fiber functor $\omega_a$ to the category of finite dimensional  $k$-vector
spaces, by assigning to $E$ its restriction in $a$. This defines a pro-algebraic group scheme $\pi_1^{{\rm alg}}(X,a):={\rm
Aut}^{\otimes }(\omega_a)$.

\medskip

Recall that the Riemann-Hilbert correspondence over $\C$ is between
$\sO_X$-coherent regular singular  $\sD_X$-modules
and
local systems, that is solutions,  in the analytic topology,  of
the linear differential equation in the Zariski topology attached to the 
$\sD_X$-module. 
Katz in \cite[Theorem~1.3]{G} showed an analog statement to the
Riemann-Hilbert correspondence in characteristic $p>0$. As a consequence of
 Cartier's characterization of $p$-curvature $0$ connections (\cite[Theorem~5.1]{Ka}), if $E$ is a coherent $\sD_X$-module, 
then the associated
flat connection is spanned by flat sections, now in the Zariski topology. This
defines a bundle $E^{(1)}$ on $X^{(1)}$ together with an isomorphism
$(E,\nabla)\cong (F^*E^{(1)}, d\otimes 1_{E^{(1)}})$, where we write $F_{X/k}^*E^{(1)}=\sO_X\otimes_{F_{X/k}^{-1}\sO_{X^{(1)}}} F_{X/k}^{-1}E^{(1)}$. Then $E^{(1)}$ is precisely the subsheaf of $E$ annihilated by all the differential operators of order $\le (p-1)$ and is a vector bundle on $X^{(1)}$.  In particular, the restriction  $\sigma_0: E\to F^*_{X/k} E^{(1)}$ to the bundles of the isomorphism of flat connections determines uniquely the whole isomorphism. 
Further, 
the differential operators of $X$ of order $\ge p$ act on $E^{(1)}$. The
subsheaf of sections annihilated by all differential operators of order $\le
(p^2-1)$ is a vector bundle $E^{(2)}$ on $X^{(2)}$ and one has an isomorphism of
vector bundles $\sigma_1: E^{(1)}\to F_{X^{(1)}/k}^* E^{(2)}$ etc. 
A stratified bundle $\E=(E^{(n)}, \sigma_n)_{n\in \N}$ is a sequence of bundles $(E^{(0)}=E,E^{(1)}, E^{(2)}, \ldots)$ together with a sequence of $\sO_{X^{(n)}}$-isomorphisms
$\sigma_n: E^{(n)} \to F_{X^{(n)}/k}^* E^{(n+1)}$ of bundles. A morphism $\varphi: \E\to \E'$  is a collection $(\varphi_0,\varphi_1,\varphi_2 ,\ldots)$ where
$\varphi_n: E^{(n)}\to E^{' (n)}$ is a bundle map commuting with the $\sigma_i$ and $\sigma'_i$.  
Katz' theorem ({\it loc.~cit.}) asserts that  the functor  which assigns to a
$\sO_X$-coherent $\sD_X$-module its underlying stratified bundle as explained
above is an equivalence of categories.  This is analog to the Riemann-Hilbert
correspondence. However, as it does not involve a stronger topology than the
Zariski one, there is no growth condition at infinity of solutions which appears
here in the non-proper case. This notion enters later in the sequel. 

\medskip

If $\pi: Y\to X$ is a finite \'etale cover, then $\pi_*\sO_Y$ is an
$\sO_X$-coherent $\sD_X$-module. This defines a surjective homomorphism
\ga{}{ \label{pfp}\pi_1^{\rm alg}(X,a)\to \pi^{\rm \acute{e}t}(X,a)}
of pro-algebraic groups, if one considers $\pi^{\rm \acute{e}t}(X,a)$ as a constant pro-finite algebraic group over $k$.  By \cite[Proposition~13]{dS}, this map is the pro-finite completion, as in \eqref{pf}.

\medskip 

The analogy with the characteristic $0$ theory becomes more involved 
when
one discusses \eqref{fact}.  Finite \'etale tame covers (see \cite{KS} for a
precise and detailed account) define a full subcategory of the category of
finite \'etale covers.  This defines the quotient $\pi_1^{\rm
\acute{e}t,tame}(X,a)$ of $\pi_1^{\rm \acute{e}t}(X,a)$.
On the other hand, Gieseker \cite[Section~3]{G} defines regular singular
$\sO_X$-coherent $\sD_X$-modules, assuming $X$ admits a smooth projective
compactification $\hat{X}$ such that $\hat{X}\setminus X$ is a strict normal
crossings divisor. In \cite[Section~3]{Ki}, the concept of a regular singular $\sO_X$-coherent $\sD_X$-module  is defined unconditionally. 
If $X\subset \hat{X}$ is a partial (i.~e. is not necessarily projective) smooth compactification such that $\hat{X}\setminus X$ is a strict normal crossings divisor,  regularity with respect to this partial compactification is defined as usual,  by assuming the existence of a $\sO_{\hat X}$-coherent extension of the underlying $\sO_X$-locally free sheaf,  on which the $\sD_X$-action extends to an action of the differential operators which stabilize the ideal sheaf of $\hat{X}\setminus X$.  Then the $\sD_X$-module is regular singular if it is relatively to all such partial compactifications. This notion coincides with Gieseker's one if one has a good compactificaton.
The
full subcategory of $\sO_X$-coherent regular singular $\sD_X$-modules is a sub
Tannaka category.   This defines a quotient $\pi_1^{\rm alg, rs}(X,a)$ of
$\pi_1^{\rm alg}(X,a)$. 
The main theorem of \cite[Section~4]{Ki} asserts that the composite of \eqref{pfp} with  $ \pi_1^{\rm \acute{e}t}(X,a) \to \pi_1^{\rm \acute{e}t,tame}(X,a) $ factors through $\pi_1^{\rm alg, rs}(X,a)$ 
\ga{}{\label{factp}  \xymatrix{ \ar[d] \pi_1^{\rm alg}(X,a)\ar[r] & \ar[d] \pi^{\rm \acute{e}t}(X,a)\\
\pi_1^{\rm alg, rs}(X,a)\ar[r] &  \pi^{\rm \acute{e}t, tame}(X,a)
}
}
and that $  \pi^{\rm \acute{e}t, tame}(X,a)$, while considered as a constant pro-algebraic group over $k$,  is the pro-finite completion of $\pi_1^{\rm alg, rs}(X,a)$.

\medskip

Let $k$ be  an algebraically closed field of characteristic $p>0$.
So we may raise the following questions in analogy with Proposition~\ref{f}.
\begin{questions} \label{q:tame}
\begin{itemize}
\item[i)] 
Let $f: X\to Y$ be a morphism between smooth quasi-projective varieties mapping $a\in X(k)$ to $b\in Y(k)$. 
Assume $$f_*:  
\pi_1^{\rm \acute{e}t, tame}(X,a)\to \pi_1^{\rm \acute{e}t, tame}(Y,b)$$ is an isomorphism, then
is 
$$f_*: \pi_1^{\rm alg, rs }(X,a)\to \pi_1^{\rm alg, rs}(Y,b)$$  an isomorphism as well?
 \item[ii)] Let $X$ be a  smooth connected  quasi-projective variety with $a\in
X(k)$, such that  $\pi_1^{\rm \acute{e}t, tame}(X,a)$ is trivial. Then is
$\pi_1^{\rm alg,rs }(X,a)$ trivial as well?
\end{itemize}
\end{questions}

The theory of tame fundamental groups, unfortunately, isn't well developed. 
Natural properties are now yet known. For example, as far as we are aware of, 
K\"unneth formula and the base change property are not known. They  are 
both
known for
the maximal prime to $p$ quotient of $\pi_1^{\rm \acute{e}t}(X,a)$ (see
\cite{Or} and in it Remarque~5.3 for base change).  
 Topological finite presentation or even topological finite generation are not
known
 (see \cite[Th\'eor\`eme~6.1]{OV} for finite generation and for $X$ being the
complement of a  divisor in a smooth projective curve over an algebraically
closed field $k$, but in higher dimension, we do not know the necessary
Lefschetz theorems to conclude).
A main obstruction  to generalize the corresponding results in \cite{SGA1}  is the absence of resolution of singularities (see for example 
\cite[Theorem~A.15]{LO} for the homotopy exact sequence for smooth varieties
over an algebraically closed field for the  tame fundamental groups under the
assumption of  resolution of singularities).

\medskip

Of course we can raise the same questions dropping the tameness assumption.

\begin{questions} \label{q:proj}
\begin{itemize}
\item[i)] 
Let $f: X\to Y$ be a morphism between smooth quasi-projective varieties mapping $a\in X(k)$ to $b\in Y(k)$. 
Assume $$f_*:  
\pi_1^{\rm \acute{e}t}(X,a)\to \pi_1^{\rm \acute{e}t}(Y,b)$$ is an isomorphism, then
is 
$$f_*: \pi_1^{\rm alg }(X,a)\to \pi_1^{\rm alg}(Y,b)$$  an isomorphism as well?
 \item[ii)] Let $X$ be a  smooth connected  quasi-projective variety with $a\in
X(k)$, such that  $\pi_1^{\rm \acute{e}t}(X,a)$ is trivial. Then is
$\pi_1^{\rm alg }(X,a)$ trivial as well?
\end{itemize}
\end{questions}

If $X$ is smooth, then $X\setminus \Sigma$ has the same $ \pi_1^{\rm
\acute{e}t}$, resp. $  \pi_1^{\rm alg }$ as $X$ when $\Sigma$ has codimension
$\ge 2$. So Questions \ref{q:proj} reduce to $X,Y$ projective
smooth in i), and
$X$ projective smooth in ii) if we start with $X\setminus \Sigma_X$ and
$Y\setminus \Sigma_Y$, with $\Sigma_X, \Sigma_Y$ of codimension $\ge 2$. On the
other hand, by 
blowing up several times and removing divisors of a smooth projective variety with $\pi_1^{\rm \acute{e}t}(X,a)=0$,  one easily constructs examples 
of non-proper smooth varieties $X^0$  with $\pi_1^{\rm \acute{e}t}(X^0,a)=0$, with a nice normal compactification such that the locus at infinity has codimension $\ge 2$, but the compactification is not  smooth (see \cite{Ki}).
So in absence of resolution of singularities, and in view of the difficulty
to find interesting examples, this is meaningful to ask, even if in
characteristic $0$, the answer is negative, as one sees for example on the affine line. Indeed, 
 any non-zero closed $1$ form $\omega$ on $\A^1$ defines a non-trivial connection
$d+\omega$ on $\sO_{\A^1}$. But, in characteristic $p>0$, $\pi_1^{\rm
\acute{e}t}(\A^1,0)$ is highly non-trivial.

When $X$ is smooth projective,  Question \ref{q:tame}, or, equivalently, Question~\ref{q:proj} is Gieseker's conjecture 
\cite[p.~8]{G}. In analogy with  Corollary~\ref{cor:f}, Gieseker \cite{G}, {\it loc.~cit.} further raised the following questions.

\begin{questions} \label{q:abelian}
Let  $X$ be a smooth connected projective variety with $a\in
X(k).$
\begin{itemize}
\item[i)] Does
every irreducible $\sO_X$-coherent $\sD_X$-module
  have
rank $1$ if and only if the commutator 
$[\pi_1^{\rm \acute{e}t }(X,a), \pi_1^{\rm \acute{e}t }(X,a)]$ 
is a pro-$p$-group?
\item[ii)] Is every $\sO_X$-coherent $\sD_X$-module
a direct sum of rank $1$ ones if and only if $ \pi_1^{\rm \acute{e}t }(X,a)$ is abelian without nontrivial $p$-power quotient?
\end{itemize}
\end{questions}

\medskip

Finally,  assuming $X$ projective smooth,  while we have at disposal Langer's quasi-projective moduli varieties of Gieseker semi-stable  bundles \cite{L}, we do not have any analog to Simpson's  quasi-projective moduli space $M_{dR}(X)$ and of $M_{{\rm Betti}}(X)$.

\section{Results} \label{sec:results}
The aim of this section is to describe the answers we have to Questions \ref{q:tame}, \ref{q:proj}, \ref{q:abelian}. Unfortunately, they are only partial answers.

\subsection{Question~\ref{q:proj}, ii)}
It has  a positive answer when $X$ is projective (see \cite[Theorem~1.1]{EM}), this is the
main result. We describe now the analogies between the proof of
Proposition~\ref{f}, ii) over
 $\C$ and the proof of \cite{EM}, {\it{ loc.~cit}}.
 
\medskip

Over $\C$ we were using that the monodromy group of a representation of $\pi_1^{\rm top}(X,a)$ lies in some $GL(r, A)$ for $A$ a ring of finite type over $\Z[\frac{1}{N}]$, as $\pi_1^{\rm top}(X,a)$ is finitely generated. This is a statement on the complex local system, which is not easy to transpose on the side of the $\sD_X$-modules.

\medskip

 In characteristic $p>0$,  we do not have a group of finite type controlling the representation 
$\pi^{\rm alg}(X,a)\to GL(r, k)$ coming from Tannaka theory. Going to the side
of stratified bundles, we do not have quasi-projective moduli spaces for them
either. But, $X$ being projective, a stratified bundle $\E$ is up to isomorphism
determined by the underlying vector bundles $(E^{(n)})_{n\ge 0}$ (Katz' theorem
\cite[Proposition~1.7]{G}). If we can make sure that the $E^{(n)}$ are all
$\mu$-stable of slope $0$, we can ``park''  them all on one quasi-projective
moduli $M$.  That we may assume that they are $\mu$-stable relies on two facts. 
Firstly there is a  structure theorem asserting that there is a natural number
$n_0$ such that the shifted stratified bundle $(E_{n_0}, E_{n_0+1},\ldots),
(\sigma_{n_0}, \sigma_{n_0+1}, \ldots)$ is a successive extension of stratified
bundles, each of which with the property that its underlying vector bundles are
all $\mu$-stable of slope $0$ (\cite[ Proposition~2.3] {EM}. Secondly, 
extensions of the trivial stratified bundle by itself 
form a group, which, again using Katz' theorem, is identified with the group
$\varprojlim_{F} H^1(
X, \sO_X)$, where $F$ is the absolute Frobenius, which is trivial if
$\pi^{\rm{\acute{e}t}}_1(X,a)=0$. In fact 
$H^1(X, \Z/p)=0$ is enough here to conclude
(\cite[Proposition~2.4]{EM}).

\medskip
Further over $\C$, once one has  $\rho: \pi_1^{\rm top}(X,a) \to GL(r, A)$, we find a closed point $s\in \Spec A$ such that the specialization $\rho\otimes \kappa(s): \pi_1^{\rm top}(X,a) \to GL(r, \kappa(s))$ is not trivial if $\rho$ is not trivial. 

\medskip

In characteristic $p>0$, we consider a model $M_R$ of  $M$ over some ring  $R$
of finite type over $\F_p$. Even of $M$ is not a fine moduli space, it is better
than a coarse moduli space, in particular it gives a moduli interpretation of
$\bar{\F}_p$-points of $M_R$, assuming that $X$ itself has a model $X_R$ over
$R$. We consider the Zariski closure $N\subset M $  of all the moduli points
$E^{(n)}$, and specialize $N$  to a closed point $s$ of $\Spec R$ for some $R$
on which it is defined. Then the whole point is to show that some such
specialization contains a moduli point $V$ say which is fixed by some power of
Frobenius (\cite[Theorem~3.14]{EM}). This yields a Lang torsor over
$X_R\otimes_R s$ by resolving the Artin-Schreier type equation $(F^{m}-{\rm
Id})^*(V)=0$ \cite[Satz~1.4]{LS}.  The theory of specialization of the \'etale
fundamental group \cite[X,~Th\'eor\`eme~3.8]{SGA1} forces then $V$ to be
trivial, a contradiction, unless $N$ is empty. Now, in order to find such a
point $V$, one has to apply Hrushovsky's fundamental result
\cite[Corollary~1.2]{H} stemming from model theory. This, undoubtedly, is a
deeper step than finding over $\C$ an $s$ for which $\rho\otimes \kappa(s)$ is
not trivial.

\subsection{Question~\ref{q:abelian}}
It has a positive answer.  The surjection  $\pi_1^{\rm alg}(X,a) \to \pi_1^{\rm \acute{e}t}(X,a)$
together with  some classical representation theory of finite $p$-groups imply the one direction (see \cite[Theorem~1.10]{G}): 
assuming irreducible $\sO_X$-coherent $\sD_X$-modules  have rank $1$, then $[\pi_1^{\rm \acute{e}t}(X,a),\pi_1^{\rm \acute{e}t}(X,a) ]$ is a pro-$p$-group and 
if the category of $\sO_X$-coherent $\sD_X$-modules is semi-simple, then $\pi_1^{\rm \acute{e}t}(X,a)$ is abelian without $p$-power quotient.

\medskip

We described the other direction (see \cite{ES}). The method is derived from \cite{EM}, with some new ingredients, which we briefly explain now.

\medskip

We discuss i). By the discussion of the proof of Question~\ref{q:proj} ii),  we extract the information that if $X$ has a non-trivial $\sO_X$-coherent $\sD_X$-module of rank $\ge 2$, then it also has one of the same rank with finite monodromy \cite[Theorem~2.3]{ES}. This, by classical theory of representation of $p$-groups, forbids the commutator of the monodromy group to be a $p$-group.

\medskip

We discuss ii). One has to show that the category of $\sO_X$-coherent $\sD_X$-modules is semi-simple. As irreducible  $\sO_X$-coherent $\sD_X$-modules have rank $1$, 
one has to show that there are no non-trivial extension of the trivial $\sO_X$-coherent $\sD_X$-modules by a rank $1$ one. 
To this aim, we have to replace Langer's moduli $M$ in the proof of Question~\ref{q:proj} ii) by some quasi-projective moduli $M'$ say,  to be constructed,  of non-trivial
extensions of $\sO_X$ by line bundles $L$.  The assumption implies that such an extension of $\sO_X$ by a torsion line bundle $L$ splits, and does after specialization to  $X_R\otimes_R s$ for a closed point.  Hrushovsky theorem as explained before enables one  to find split extensions as moduli points of $N'\otimes_R s$, a contradiction.

\subsection{Rank $1$} 
On $X$ quasi-projective smooth over an algebraic closed field $k$ of characteristic $p>0$, $\sO_X$-coherent $\sD_X$-modules of rank $1$
 are always regular singular \cite[Theorem~3.3]{G}. So Question~\ref{q:tame} ii) and Question~\ref{q:proj} ii) are equivalent for rank $1$ objects. It has a positive answer \cite[Section~5]{Ki}. In fact, if one restricts to rank $1$, it is enough to assume that $\pi_1^{{\rm \acute{e}t, ab}, p'  }(X,a)=0$, the prime to $p$ maximal abelian quotient of $\pi_1^{\rm \acute{e}t}(X,a)$. The point is that under this assumption, necessarily units on $X$ are constant. This implies that a rank $1$ stratified bundle is entirely determined by the underlying bundles $(L^{(n)})_{n\in \N}$. Then one shows by geometry that the assumption implies that $\Pic(X)$ is a finitely generated abelian group (\cite[Section~5]{Ki}).

\subsection{The affine space.}
A simple discussion (led with Lars Kindler) implies that Question~\ref{q:tame}
ii) has positive
answer for $X=\A^n$.
Indeed by Theorem~\cite[Theorem~5.3]{G}, any flat  regular singular $\sO_X$-coherent
$\sD_X$-module is a sum of such  rank $1$ $\sD_X$-modules. If $L$ is such  a rank $1$ $\sD_X$-module, then 
 there is a $\sO_X$-locally free
extension $\hat{L}$ on $\P^n$ on which the action of $\sD_X$ on $L$ extends to
an action of $\sD_{\P^n}(\log \infty)$. As an algebraic vector bundle,
$L=\sO_{\P^n}(d\cdot \infty)$, where $\infty=\P^n\setminus X$.  The connection
$\nabla^{(0)}: \hat{L}\to
\Omega^1_{\P^n}(\log \infty)\otimes \hat{L}$ on it is necessarily of the shape 
$d+A$. 
Here $d$ is the  connection which is uniquely defined  on
$\sO(d\cdot \infty)$ by its restriction to $\sO_{\P^n}
\hookrightarrow
\sO(d\cdot \infty)$  given by the section $d\cdot \infty$, where it is
defined
by $d(1)=0$. Then $A\in H^0(\P^n, \Omega^1_{\P^n}(\log \infty))$, which is $0$. 
  So finally $\nabla^{(0)}$ has no poles and is trivial.
We repeat this argument for $\hat{L}^{(i)}$ on $(\P^n)^{(i)}$ and conclude 
that in fact $\hat{L}$ is a $\sO_{\P^n}$-coherent $\sD_{\P^n}$-module, which is
trivial.

\section{Some comments and questions}
\subsection{Moduli}
On $X$ projective smooth over an algebraic closed field $k$, it would be very nice to extend Simpson's theory to construct  moduli spaces (would they be quasi-projective?) for stratified bundles and for $\sO_X$-coherent $\sD_X$-modules. 

\medskip

Recall that over a field of characteristic $0$, a bundle with a flat connection  $(E,\nabla)$ with finite monodromy is uniquely  determined  by $E$ (\cite{EH}, {\it loc.~cit.}).
 It would be very nice to understand how the homeomorphism between $M_{{\rm Betti}}(X)$ and $M_{dR}(X)$, followed by the forgetful morphism $(E,\nabla)\mapsto E$ with values in the moduli of bundles,  transports points corresponding to finite local systems.
 
 \subsection{From characteristic $p$ to characteristic $0$ and vice-versa}
 In spite of the strong analogy between Proposition~\ref{f} ii) for $X$ projective and Question~\ref{q:tame} ii) for $X$ projective (\cite{EM}, {\it loc.~cit.}), and between Corollary~\ref{cor:f} and Question~\ref{q:abelian} (\cite{ES}, {\it loc.~cit.}), we do not have a direct way to go back and forth between characteristic $p>0$ and characteristic $0$. Indeed, a flat connection in characteristic $0$ specializes to a flat connection in characteristic $p>0$, that is to an action of the differential operators of order $\le 1$, but this action does not extend to an action of $\sD_X$.  Already to request $p$-curvature $0$ 
  (which is equivalent, as already explained,  to the existence of $E^{(1)}$)
 for almost all $p$ should be, according to Grothendieck's  $p$-curvature conjecture, equivalent to the connection in characteristic $0$ to have finite monodromy. 
 Assuming Simpson's moduli spaces $M_{dR}(X)$ specialize to moduli spaces of flat connections for almost all $p$, 
 at least at the level of geometric points, a positive answer to Grothendieck's conjecture, for $X$ projective, would provide some answer to the previous question. In the present state of knowledge, we know  the well behavior of moduli points corresponding to finite monodromy 
 only in equal characteristic $0$ for de Rham moduli  (\cite{An}) and $p>0$  for moduli of semi-stable sheaves or, in absence of moduli, for families of stratified bundles, assuming the monodromy groups have order prime to $p$  (\cite{EL}). 
 
 \subsection{Question~\ref{q:tame} ii)} Of course, all unanswered questions in \ref{q:proj}, \ref{q:tame} are of interest, that is all relative cases and the absolute non-proper case. Focusing on this one, 
 even if one assumes that  $X$ has a compactification $\hat{X}$ such that $\hat{X}\setminus X$ is a strict normal crossings divisor,  the method of proof in the projective case  \cite{EM} can  not be overtaken as such.  The bundles $\hat E^{(n)}$ on $\hat X$ do not all have the same Chern classes, so we can not park them all in one moduli space. 
 
 \subsection{Finiteness} A natural question which comes from \cite{BKT}, and 
from \cite{ES}, is whether or not, if $X$ is smooth projective over an
algebraically closed field, such that  the profinite completion map $\pi_1^{{\rm
alg}}(X,a)\to \pi_1^{{\rm \acute{e}t}}(X,a)$ is an isomorphism, i.~e. all
$\sO_X$-coherent $\sD_X$-modules have finite monodromy, $\pi_1^{{\rm
\acute{e}t}}(X,a)$ itself is finite. 

\bibliographystyle{plain}

\begin{thebibliography}{99}
\bibitem {An} Andr\'e, Y.: {\em Sur la conjecture des $p$-courbures de
Grothendieck-Katz et un probl\`eme de
Dwork}, Geometric aspects of Dwork theory, Vol. I, II, 55--112,
Walter de Gruyter GmbH \& Co. KG, Berlin, 2004.
\bibitem{AH} Atiyah, M., Hirzebruch, F.: {\em Analytic cycles on complex manifolds},  Topology {\bf 1} (1962), 25--46.
\bibitem{BKT} Brunebarbe, Y., Klingler, B., Totaro, B.:  {\em Symmetric
differentials and the fundamental group of projective manifolds}, preprint 2012.
\bibitem{CS} Corlette, K., Simpson, C.: {\it On the classifications of rank-two
representations of quasi-projective fundamental groups}, 
Compositio math. {\bf 144}  (2008), 1271--1331. 
\bibitem{Del} Deligne, P.: {\em \'Equations Diff\'erentielles \`a Points Singuliers R\'eguliers}, Lecture Notes in Mathematics {\bf 163} (1970), Springer Verlag.
\bibitem{Del2} Deligne, P.:  {\em Le groupe fondamental de la droite projective moins trois points}, in ``Galois Groups over $\Q$'', Math. Sc. Res. Inst. Publ. {\bf 16},  79--297, Springer Verlag.
\bibitem{DM} Deligne, P., Milne, J.:  {\em Tannakian categories}, Lecture Notes in Mathematics {\bf 900}, Springer Verlag.
\bibitem{dS} dos Santos, J.-P.-S.: {\em Fundamental group schemes for stratified sheaves},  J. Algebra  {\bf 317}  (2007),  no. 2, 691--713.
\bibitem{EH}  Esnault, H., Ph\`ung H\^o Hai:  {\em The fundamental groupoid scheme and 
applications},   Annales de l'Institut Fourier  {\bf 58} (2008), 2381--2412.
\bibitem{EM} Esnault, H., Mehta, V.: {\em Simply connected projective manifolds in characteristic $p>0$ have no nontrivial stratified bundles},  Inventiones math. {\bf 181} (2010), 449--465.
\bibitem{ES} Esnault, H., Sun, X.:  {\em Stratified bundles and \'etale fundamental group}, preprint
2011, 18 pages, {\tt  http://www.esaga.uni-due.de/helene.esnault/downloads/} [105].
\bibitem{EL} Esnault, H., Langer, A.:  {\em On a positive equicharacteristic variant
of the
$p$-curvature conjecture}, preprint 2011, 32 pages, {\tt http://www.esaga.uni-due.de/helene.esnault/downloads/} [104].
\bibitem{G} Gieseker, D.: {\em Flat vector bundles and the fundamental group in non-zero characteristics}, Ann. Scu. Norm. Sup. Pisa,  
4. s\'erie, tome {\bf 2}, no 1 (1975), 1--31. 
\bibitem{Gr} Grothendieck, A.: {\em Repr\'esentations lin\'eaires et compactifications profinies des groupes discrets}, Manuscripta mathematica, {\bf 2} (1970), 375--396.
\bibitem{H} Hrushovski, E.: {\em The Elementary Theory of Frobenius Automorphisms}, {\tt http://de.arxiv.org/pdf/math/0406514v1}.
\bibitem{Ka} Katz, N.: {\em Nilpotent connections and the monodromy theorem},
Publ. math. I.H.\'E.S. {\bf 39} (1970), 175--232.
\bibitem{KS} Kerz, M., Schmidt, A.:  {\em On different notions of tameness in arithmetic geometry}, Math. Ann. {\bf 346} 3 (2010), 641--668.
\bibitem{Ki} Kindler, L.: PhD Thesis, Essen 2012.
\bibitem{Kl} Klingler, B.: {\em Symmetric differentials, K\"ahler groups and
ball quotients}, preprint 2012
{\tt http://people.math.jussieu.fr/\~{}klingler/papers.html}.
\bibitem{LSe} Lang, S., Serre, J.-P.:  {\em Sur les rev\^etements non ramifi\'es des vari\'et\'es alg\'ebriques},  Am. J. of Maths {\bf 79} no 2  (1957), 319--330.
\bibitem{LS} Lange, H., Stuhler, U .: {\em Vektorb\"undel auf Kurven und Darstellungen der algebraischen Fundamentalgruppe}, Math. Zeit. {\bf 156} (1977), 73--83.
\bibitem{L} Langer, A.: {\em Semistable sheaves in positive characteristic}, Annals of Maths. {\bf 159} (2004), 251--276.
\bibitem{LO} Lieblich, M., Olsson, M.: {\em Generators and relations for the
\'etale fundamental group},  Pure Appl. Math. Q. {\bf 6} 1 (2010), 209--243,
Special Issue in Honor of J. Tate, Part 2 .
\bibitem{Mal} Malcev, A.: {\em On isomorphic matrix representations of infinite groups}, Rec. Math. [Mat. Sbornik] N.S.  {\bf 8} (50)  (1940), 405--422.
\bibitem{No} Nori, M.: {\rm The fundamental group scheme}, Proc. Indian Acad. Sci. {\bf 91} (1982), 73--122.
\bibitem{OV}  Orgogozo, F., Vidal, I.: {\em Le th\'eor\`eme de sp\'ecialisation du groupe fondamental}, in ``Courbes semi-stables et groupe fondamental en g\'eom\'etrie alg\'ebrique'' Prog.   Math. {\bf 187} (2000), 169--184.
\bibitem{Or} Orgogozo, F.: {\em Alt\'erations et groupes fondamentaux premiers \`a $p$},  Bull. SMF {\bf 131} 1 (2003), 123--147.
\bibitem{Po} Poincar\'e, H. : {\em Analysis Situs},  Journal de l'\'Ecole
Polytechnique s\'er. 2, {\bf 1} (1895), 1--123.
\bibitem{Simp} Simpson, C.: {\em Moduli of representations of the fundamental
group of a smooth projective variety}, Publ. math. I.H.\'E.S. {\bf 79} (1994),
47--129.
\bibitem{SGA1} SGA1.: {\em Rev\^etements \'etales et groupe fondamental}, SGA 1.


\end{thebibliography}
\renewcommand\refname{References}

\end{document}